\title{A Short Proof to Defant and Kravitz's theorem on the Length of Hitomezashi Loops}
\author{Qiuyu Ren}
\address{Department of Mathematics, University of California, Berkeley, Berkeley, CA 94720, USA}
\email{qiuyu\_ren@berkeley.edu}
\author{Shengtong Zhang}
\address{Department of Mathematics, Stanford University, Stanford, CA 94305, USA}
\email{stzh1555@stanford.edu}
\begin{document}

\maketitle
\begin{abstract}
    We provide a shorter proof to Defant and Kravitz's theorem (arXiv:2201.03461, Theorem 1.2) on the length of Hitomezashi loops modulo 8.
\end{abstract}
\section{Introduction}
Hitomezashi, a type of Japanese style embroidery, has recently attracted attention due to its interesting mathematical properties. We refer the reader to \cite{defant2022loops} and \cite{numberphile21} for great surveys on the topic.

Following Section 1 of \cite{defant2022loops}, we abstract the stitch patterns in a Hitomezashi artwork with graph-theoretic language.
\begin{definition}
    \label{defn:hitomezashi}
    Consider the graph $\Cloth_{\ZZ}$ on $\ZZ \times \ZZ$ with $(i, j)$ adjacent to $(i, j \pm 1)$ and $(i \pm 1, j)$. A \textbf{Hitomezashi pattern} is a subgraph of $\Cloth_{\ZZ}$ defined by two infinite sequences $\epsilon, \eta \in \{0, 1\}^{\ZZ}$, with edge set 
    $$\{\{(i, j), (i + 1, j): i \equiv \eta_j \bmod{2}\} \bigcup \{\{(i, j), (i, j + 1)\}: j \equiv \epsilon_i \bmod{2}\}.$$
    A \textbf{Hitomezashi loop} is a cycle in a Hitomezashi pattern. A \textbf{Hitomezashi path} is a path in a Hitomezashi pattern. In this paper, all cycles and paths have an orientation.
\end{definition}

In \cite{defant2022loops}, Defant and Kravitz obtained the following beautiful result about the length of the loop.
\begin{theorem}[\cite{defant2022loops}, Theorem 1.2]
\label{thm:loop-length}
Every Hitomezashi loop has length congruent to $4$ modulo $8$.
\end{theorem}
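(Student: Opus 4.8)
The plan is to extract the local structure from the degree sequence, turn the global statement into a turning-number computation, and then fight separately for the last factor of two. The starting observation is that in any Hitomezashi pattern every vertex $(i,j)$ lies on exactly one horizontal pattern-edge (to $(i+1,j)$ if $i\equiv\eta_j$, else to $(i-1,j)$) and exactly one vertical pattern-edge (to $(i,j+1)$ if $j\equiv\epsilon_i$, else to $(i,j-1)$). Hence every vertex has degree $2$, so each connected component of the pattern is a cycle or a bi-infinite path, and along a loop the edges must alternate horizontal, vertical, horizontal, $\dots$. Consequently a Hitomezashi loop is an embedded rectilinear closed curve, turns by exactly $\pm 90^\circ$ at every vertex, and has even length $2m$ (with $m$ horizontal and $m$ vertical edges). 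I orient it counterclockwise throughout.

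Next I would compute the turn at each vertex. The horizontal edge at $(i,j)$ points right iff $i\equiv\eta_j$ and the vertical edge points up iff $j\equiv\epsilon_i$, so the \emph{shape} of the corner is governed by $(i+\eta_j,\,j+\epsilon_i)\bmod 2$; meanwhile, since edge types alternate along the loop, the parity of $i+j$ (which flips at every step) records whether we \emph{arrive} along the horizontal or the vertical edge. A short case analysis over the four corner shapes then yields that the signed turn at $(i,j)$ equals $\sigma\,(-1)^{\epsilon_i+\eta_j}$ for one global sign $\sigma\in\{\pm1\}$. By the Hopf Umlaufsatz the signed turns of an embedded closed curve sum to $\pm 4$, so $\sum_{(i,j)\in\gamma}(-1)^{\epsilon_i+\eta_j}=\pm 4$. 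Writing $N_+,N_-$ for the number of loop vertices with $\epsilon_i+\eta_j$ even, resp.\ odd, this gives $N_+-N_-=\pm 4$ together with $N_++N_-=2m$, whence $\{N_+,N_-\}=\{m+2,\,m-2\}$.

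To pin the length modulo $4$ I would run a second, purely parity-based argument in parallel: the pair $(i\bmod 2,\,j\bmod 2)$ advances through a fixed $4$-cycle as one walks along the loop (each horizontal step flips the first coordinate, each vertical step the second, and they alternate), so closing the loop forces $4\mid 2m$, i.e.\ $m$ is even and the length is $\equiv 0\pmod 4$. Combined with the previous paragraph, the whole problem now reduces to the single assertion that $m/2$ is \emph{odd}, equivalently that the length is not divisible by $8$.

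The hard part is exactly this last factor of two, and I expect it to be the main obstacle: it is invisible to the turning number (which only constrains the length modulo $4$) and must come from finer geometry specific to Hitomezashi patterns. A useful first simplification is that all horizontal edges in a fixed row $j$ are traversed in the same direction — their left endpoints all satisfy $i\equiv\eta_j$, hence share the value of $(i+j)\bmod 2$ and thus the same arrival pattern — so the eastward edges are organized row by row and $m/2$ equals the number of eastward edges. The target becomes: the number of eastward edges is odd. To establish this I would try to produce a dedicated structural lemma, the most promising candidate being that a Hitomezashi loop is centrally symmetric about a cell center (a point with both coordinates in $\tfrac12+\ZZ$); one would then hope to read off the residual odd contribution from the unique fixed central cell, or else force the parity directly by a Jordan-curve count (a generic ray from an interior cell center meets the loop an odd number of times). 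Identifying and justifying the correct such argument — and checking it genuinely uses the coupling between $\epsilon$ and $\eta$, since the mod-$4$ count of eastward edges is \emph{not} forced by the topological data alone — is where I anticipate the real difficulty to lie.
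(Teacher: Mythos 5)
Your preliminary analysis is sound as far as it goes: every vertex of a Hitomezashi pattern indeed has degree exactly two, edge types alternate along a loop, the signed turn at a loop vertex $(i,j)$ does equal $\sigma(-1)^{\epsilon_i+\eta_j}$ for a single global sign $\sigma$ (your case analysis checks out, using the fact recorded in \cref{lem:parity-direction}), and the four-periodicity of $(i\bmod 2,\,j\bmod 2)$ along the loop does force the length to be divisible by $4$. But all of this is the easy half of the statement, and your proposal stops exactly where the theorem begins: everything you establish is consistent with a loop of length $\equiv 0 \pmod 8$, and you explicitly leave the exclusion of that case --- equivalently, the oddness of $m/2$, the number of eastward edges --- as an unproven ``dedicated structural lemma.'' The Hopf Umlaufsatz computation, while correct, constrains only $N_+-N_-$ and gives no information modulo $8$, as you yourself note. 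So this is a genuine gap, and it is precisely the entire content of the theorem.

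Moreover, the routes you sketch for closing the gap do not work as stated. Central symmetry of each loop about a cell center is a true but deep fact --- it is Pete's theorem on corner percolation, i.e.\ exactly the external result \cite{Pete2008} on which Defant and Kravitz's original argument relied and which a short self-contained proof should avoid --- so it cannot be invoked as a mere auxiliary lemma. Even granting it, neither suggested finish extracts the parity you need: the point reflection about a cell center fixes no edge and pairs each eastward edge with a westward edge, so it only reproves that eastward and westward edges are equinumerous (which you already know from closedness); and the Jordan-curve ray count shows that an upward ray from the center crosses an odd number of horizontal edges of the loop, which is a statement about one vertical strip, not about the total number of eastward edges summed over all rows. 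For comparison, the paper obtains the mod-$8$ information with no symmetry input at all: it cuts the loop along its leftmost vertical line $x=a$ into ``excursions'' and proves by induction (\cref{lem:excursion-length}) that an $a$-excursion from $(a-1,i)$ to $(a-1,j)$ has length $\equiv 2|j-i|+1 \pmod 8$; ordering the cut points via \cref{lem:vertical-direction} and \cref{lem:disjoint-path}, the contributions then telescope to $4 \pmod 8$. Some such inductive mechanism (or a genuinely new idea of comparable strength) is what your proposal is missing.
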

\begin{figure}[h]
    \centering
    \includegraphics[width=0.5\linewidth]{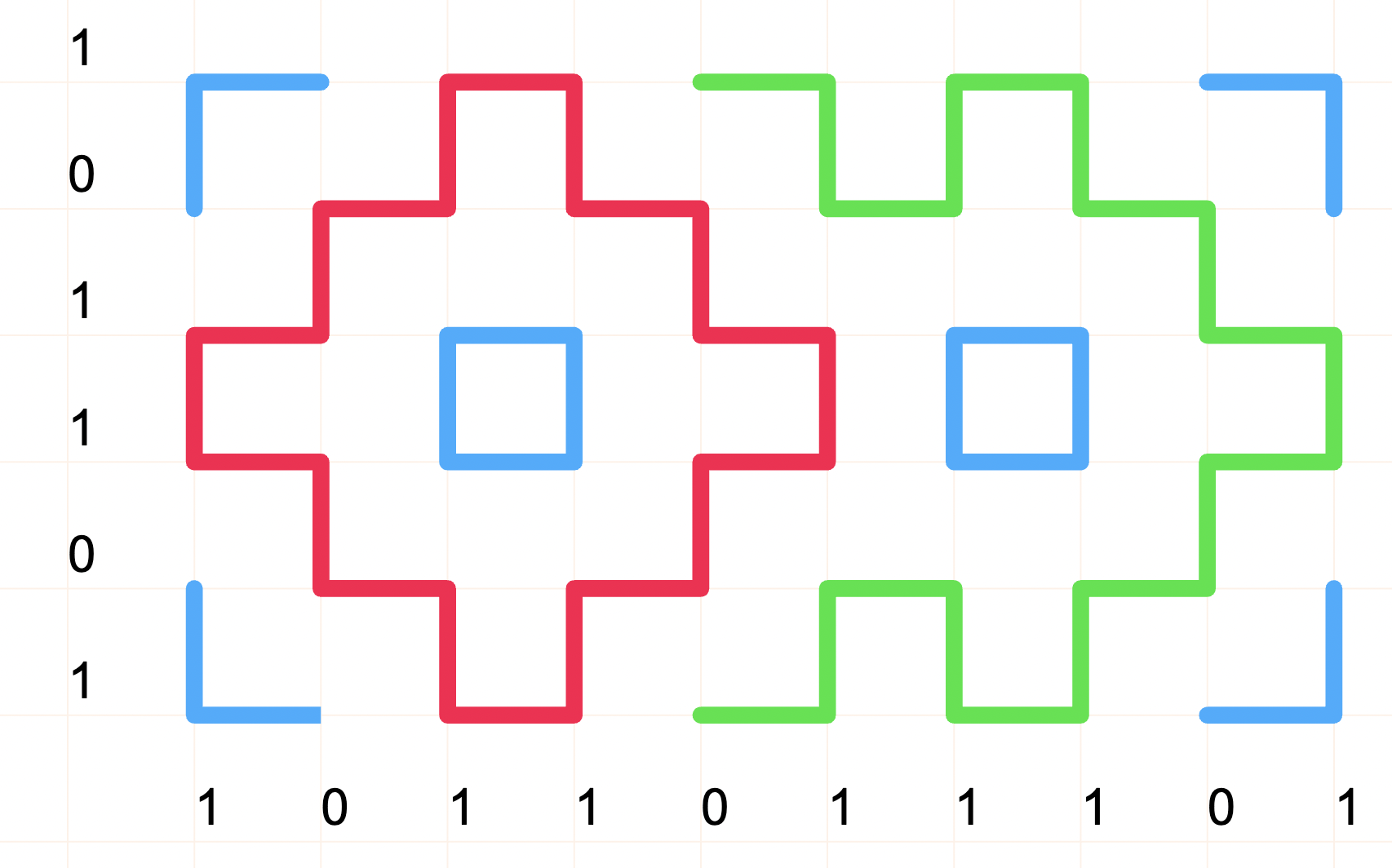}
    \caption{Part of a Hitomezashi pattern. The $\{0, 1\}$-labels denote values of $\epsilon$ and $\eta$ on each horizontal and vertical line. The red edges form a Hitomezashi loop, while the green edges form a Hitomezashi path. Note the red loop has length $20$, congruent to $4$ modulo $8$.}
    \label{fig:pattern}
\end{figure}
Their proof uses a complicated induction scheme, relying on additional structural results about the loop in \cite{Pete2008}. Given the simplicity of the theorem statement, we believe a shorter proof would be of interest.

In this paper, we present a two-page, self-contained proof of \cref{thm:loop-length}. Our main innovation is to induct on a different class of objects we call ``Hitomezashi excursions". This avoids many of the technical difficulties in \cite{defant2022loops}, which inducts on Hitomezashi loops.
\section{The proof}
We first collect some simple lemmata. 
\begin{lemma}
    \label{lem:parity-direction}
    On a Hitomezashi path, the edges starting at $(i, j)$ and $(i', j')$ are parallel if and only if $i + j \equiv i' + j' \bmod{2}$.
\end{lemma}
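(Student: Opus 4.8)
The plan is to exploit the rigidity of the pattern coming from \cref{defn:hitomezashi}: \emph{every vertex of $\Cloth_{\ZZ}$ is incident to exactly one horizontal and one vertical edge of the Hitomezashi pattern}. First I would verify this directly. At $(i,j)$ the horizontal edge to $(i+1,j)$ is present exactly when $i \equiv \eta_j \pmod 2$, and the one to $(i-1,j)$ exactly when $i-1 \equiv \eta_j \pmod 2$, i.e.\ $i \not\equiv \eta_j \pmod 2$; precisely one of these holds, so there is a unique horizontal edge at $(i,j)$. The symmetric computation with $\epsilon$ gives a unique vertical edge. Hence every vertex of the pattern has degree $2$.

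From this I would deduce that edge directions alternate along the path. Writing the oriented path as $v_0, v_1, \dots, v_n$, each interior vertex $v_k$ meets exactly two pattern edges, one horizontal and one vertical, and the path uses both of them (the incoming edge $v_{k-1}v_k$ and the outgoing edge $v_k v_{k+1}$). Thus consecutive edges of the path are perpendicular, so the orientation of the edge starting at $v_k$ is determined by the parity of $k$. On the other hand, every edge changes exactly one coordinate by $\pm 1$, so $i+j$ flips parity at each step, and the parity of the coordinate sum at $v_k$ is likewise determined by the parity of $k$.

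Combining the two facts, the orientation of the edge starting at $v_k$ and the parity of its coordinate sum are both governed by $k \bmod 2$, so they stay perfectly correlated along the entire path: the edges starting at $v_k$ and $v_\ell$ are parallel if and only if $k \equiv \ell \pmod 2$, which holds if and only if the two coordinate sums agree modulo $2$. This is exactly the assertion of the lemma. I do not anticipate a real obstacle; the only point to handle with care is that this is a genuine biconditional rather than a one-way implication, but the alternation argument secures both directions at once, independently of the global choice of which parity carries the horizontal edges.
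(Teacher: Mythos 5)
Your proof is correct and takes essentially the same approach as the paper's one-line argument: consecutive edges on a Hitomezashi path are orthogonal while the parity of $i+j$ flips at each step, so edge direction and coordinate-sum parity stay perfectly correlated along the path. The only difference is that you explicitly justify the orthogonality via the degree-$2$ (one horizontal, one vertical edge per vertex) structure of the pattern, which the paper leaves implicit.
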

\begin{proof}
 Consecutive edges on the path are orthogonal and have different parity of $i + j$. 
\end{proof}
\begin{lemma}[\cite{defant2022loops}, Proposition 2.3]
\label{lem:vertical-direction}
Let $\cC$ be a Hitomezashi path. Then all edges of $\cC$ on a vertical line $x = a$ have the same direction, and the $y$-coordinates of their starting vertices have the same parity.
\end{lemma}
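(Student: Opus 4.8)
The plan is to derive both assertions directly from \cref{lem:parity-direction}, with essentially no work beyond bookkeeping. First I would record the trivial but crucial observation that any two edges of $\cC$ lying on vertical lines are parallel (both are vertical). Applying \cref{lem:parity-direction} to two such edges, say with starting vertices $(i,j)$ and $(i',j')$, gives $i+j\equiv i'+j'\bmod 2$. When both edges lie on the \emph{same} line $x=a$ we have $i=i'=a$, so this collapses to $j\equiv j'\bmod 2$. This is exactly the second assertion: the $y$-coordinates of the starting vertices of all edges of $\cC$ on $x=a$ share a common parity.

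Next I would upgrade this to the statement about directions. The extra ingredient is the defining edge condition: a vertical edge $\{(a,j),(a,j+1)\}$ belongs to the Hitomezashi pattern only when $j\equiv\epsilon_a\bmod 2$, so the lower endpoint of every vertical edge on $x=a$ has $y$-coordinate $\equiv\epsilon_a$. Consequently the orientation of such an edge is pinned down by the parity of its starting vertex: the edge points upward precisely when its starting vertex is the lower endpoint, i.e.\ when the starting $y$-coordinate is $\equiv\epsilon_a$, and points downward precisely when the starting $y$-coordinate is $\equiv\epsilon_a+1$. Combining this with the parity constancy from the previous paragraph forces all these edges to point the same way, giving the first assertion.

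The proof is therefore short, and I do not anticipate a serious obstacle; the only points to be careful about are (i) explicitly invoking the edge-existence condition $j\equiv\epsilon_a$ rather than leaving it implicit, since this is precisely what converts a statement about starting parities into a statement about orientations, and (ii) fixing an orientation convention so that ``direction'' and ``starting vertex'' are unambiguous, then checking the argument is insensitive to that choice, so that it applies verbatim to a loop regarded as a cyclically oriented path. A reasonable sanity check before writing the final version is to test the argument against \cref{fig:pattern}, confirming that on each vertical line the loop's stitches all point the same way.
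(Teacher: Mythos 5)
Your proposal is correct and takes essentially the same approach as the paper: apply \cref{lem:parity-direction} to two (necessarily parallel) vertical edges to get the parity statement, then use the fact that the parity of the starting $y$-coordinate determines the edge's direction. The paper compresses your second paragraph into the single remark that ``the parities of $y_i$ determine the direction of the corresponding edges''; your explicit invocation of the edge-existence condition $j \equiv \epsilon_a \bmod 2$ is exactly the content of that remark, just spelled out.
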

\begin{proof}
    Let $y_1$ and $y_2$ be the starting $y$-coordinates of two such edges. By \cref{lem:parity-direction}, we have $y_1 \equiv y_2 \bmod{2}$. The parities of $y_i$ determine the direction of the corresponding edges, so the lemma holds.
\end{proof}
We also need the following topological observation. Let $\cH_a$ denote the half plane $\cH_a := \{(x, y): x \geq a\}$.
\begin{lemma}
    \label{lem:disjoint-path}
    Suppose two continuous paths in $\cH_a$, one connecting $(a, y_1)$ and $(a, y_3)$ and the other connecting $(a, y_2)$ and $(a,y_4)$, are disjoint. Then we cannot have $y_1 \leq y_2 \leq y_3 \leq y_4$.\qed
\end{lemma}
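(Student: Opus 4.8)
The plan is to reduce to the Jordan curve theorem. First I observe that, since the two paths are disjoint, no equality among $y_1 \le y_2 \le y_3 \le y_4$ can hold: an equality of two of the $y_i$ would make the endpoints $(a,y_i)$ coincide, giving the two paths a common point. So it suffices to derive a contradiction from the assumption $y_1 < y_2 < y_3 < y_4$. Write $P$ for the path joining $(a,y_1)$ and $(a,y_3)$, and $Q$ for the one joining $(a,y_2)$ and $(a,y_4)$. Replacing $P$ by a simple sub-arc with the same endpoints (a standard fact: the image of a path contains a simple arc joining its endpoints) only shrinks $P$, so it preserves both $P \subseteq \cH_a$ and $P \cap Q = \emptyset$; hence I may assume $P$ is a simple arc.

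Next I close $P$ into a Jordan curve by a controlled detour through the open left half-plane $\{x < a\}$. Let $P'$ be the three-segment polygonal path from $(a,y_3)$ to $(a-1,y_3)$ to $(a-1,y_1)$ to $(a,y_1)$. Its interior lies in $\{x<a\}$, and it meets the line $\{x=a\}$ only at its endpoints $(a,y_1),(a,y_3)$, which are exactly the endpoints of $P$; since $P \subseteq \cH_a = \{x\ge a\}$, we get $P \cap P' = \{(a,y_1),(a,y_3)\}$, so $J := P \cup P'$ is a Jordan curve. The point of this construction is that all interaction with the ``wild'' arc $P$ is confined to the right half-plane, while every crossing I shall test happens on the tame polygonal piece $P'$.

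Now I locate the endpoints of $Q$ relative to $J$ via the crossing-parity characterization of the interior furnished by the Jordan curve theorem. Neither $(a,y_2)$ nor $(a,y_4)$ lies on $J$: both lie on $Q$, hence off $P$ by disjointness, and off $P'$ since $y_2,y_4 \notin \{y_1,y_3\}$. The leftward horizontal ray from $(a,y_4)$ has height $y_4 > y_3$, so it misses the vertical segment of $P'$ (spanning heights $[y_1,y_3]$), the two horizontal segments of $P'$ (at heights $y_1,y_3$), and all of $P \subseteq \{x\ge a\}$; being a ray disjoint from the compact set $J$ and running to infinity, it witnesses that $(a,y_4)$ lies in the unbounded (exterior) component. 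The leftward ray from $(a,y_2)$, at height $y_1 < y_2 < y_3$, again misses $P$ and the two horizontal segments of $P'$, but crosses the vertical segment of $P'$ exactly once and transversally; by the even--odd rule $(a,y_2)$ lies in the interior. Thus the connected image of $Q$ joins an interior point to an exterior point and must meet $J$. But $Q \cap P = \emptyset$, and $Q \cap P' \subseteq \{(a,y_1),(a,y_3)\} \subseteq P$ is likewise empty; so $Q \cap J = \emptyset$, a contradiction.

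The main obstacle, and exactly what the construction is designed to neutralize, is that the hypotheses supply only continuous, possibly self-intersecting paths that may touch the line $\{x=a\}$ at many interior points, so $P$ need not be a clean chord of a disk. Passing to a simple sub-arc disposes of self-intersection, and routing $P'$ strictly through $\{x<a\}$ guarantees that the two test rays meet only the polygonal piece, making the even--odd count unambiguous no matter how erratically $P$ behaves along the boundary line.
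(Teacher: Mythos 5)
Your proof is correct. Note, though, that the paper itself offers no proof of this lemma at all: the \qed in the statement signals that Defant--Kravitz-style ``two disjoint paths in a half-plane cannot have interleaved endpoints on the boundary line'' is being taken as an evident topological fact. So you are not diverging from the paper's argument so much as supplying the rigor it declines to spell out. Your write-up does this cleanly: passing to a simple sub-arc of $P$ legitimizes the Jordan curve theorem; closing up through the open left half-plane $\{x < a\}$ ensures the closing arc $P'$ meets $\cH_a$ only at $(a,y_1)$ and $(a,y_3)$, so the curve $J = P \cup P'$ is genuinely simple and $Q \subseteq \cH_a$ cannot touch $P'$; and the two leftward test rays interact only with the straight vertical segment of $P'$, where a single transversal crossing makes the even--odd determination of inside versus outside unambiguous regardless of how $P$ behaves along the line $x = a$. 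The initial reduction from weak to strict inequalities (any equality forces the two paths to share an endpoint) is also a necessary step that is easy to overlook. This is exactly the kind of argument one would include if the lemma were to be proved in full, and every step you invoke (arc-connectedness of path images, the Jordan curve theorem, the crossing-parity criterion at a locally flat point of the curve) is standard.
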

We now introduce the ``excursion". See \cref{fig:excursions} for a visualization.
\begin{definition}
 Let $a$ be an integer. A \textbf{(Hitomezashi) $a$-excursion} is a Hitomezashi path with at least three vertices, whose start and end vertices lie on the vertical line $x = a - 1$, and all other vertices lie in $\cH_a$.
\end{definition}
Induction on excursions is much easier than induction on loops, as we illustrate in the next lemma.
\begin{lemma}
\label{lem:excursion-length}
The length of an $a$-excursion from $(a-1, i)$ to $(a-1, j)$ is congruent to $2\abs{j - i} + 1$ modulo $8$. 
\end{lemma}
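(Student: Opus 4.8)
The plan is to induct on the length of the $a$-excursion, with the key idea being that any excursion either is ``minimal'' (a base case we can compute directly) or can be decomposed into strictly shorter excursions by cutting along a vertical line where it returns close to the boundary. Before setting up the induction, I would first extract structural information from the lemmata already proved. By \cref{lem:vertical-direction}, all edges of the excursion on the line $x = a-1$ point in the same direction; since the start vertex $(a-1,i)$ and end vertex $(a-1,j)$ both lie on this line and the interior lies in $\cH_a$, the first edge must point rightward (into $\cH_a$) and the last edge must arrive from the right, so in particular $i \equiv j \bmod 2$, making $\abs{j-i}$ even and $2\abs{j-i}+1$ odd. This is a good consistency check: an excursion has odd length because its two endpoints lie on $x=a-1$ while everything else is strictly to the right, so the path crosses the line $x = a - \tfrac12$ an odd number of times.

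For the base case I would take the shortest possible excursion. The minimal excursion goes $(a-1,i) \to (a,i) \to (a,i\pm 1) \to (a-1,i\pm 1)$, which has length $3$; here $\abs{j-i}=1$, so $2\abs{j-i}+1 = 3$, matching. More generally, the ``staircase'' excursion that steps monotonically has length exactly $2\abs{j-i}+1$, which I would verify directly by counting its horizontal edges ($2$, one out and one back) and vertical edges ($\abs{j-i}$... \emph{no}): I should count carefully, but the point is that the straight-line-distance excursion realizes the claimed value exactly, not merely modulo $8$.

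The inductive step is where the real work lies, and I expect it to be the main obstacle. Given an excursion $\cC$ from $(a-1,i)$ to $(a-1,j)$, I would look at its first return to the line $x=a$ after leaving it, or more usefully, find an intermediate vertical line $x = b$ with $b \geq a$ that the path touches, and split $\cC$ at the topmost or bottommost visit to $x=b-1$ into two sub-excursions. The difficulty is guaranteeing that such a splitting produces genuine $b$-excursions (each with at least three vertices, endpoints on $x=b-1$, interior in $\cH_b$) and that the two $y$-displacements $\abs{j-i}$ add up correctly so that the modulo-$8$ contributions combine. The crucial arithmetic check is that if $\cC$ splits into excursions of ``widths'' $\abs{j-k}$ and $\abs{k-i}$ (with lengths $\equiv 2\abs{j-k}+1$ and $\equiv 2\abs{k-i}+1$), the two $+1$ terms must not simply add; instead the two sub-excursions share structure along the cutting line so that the total is $2(\abs{j-k}+\abs{k-i})+1 \equiv 2\abs{j-i}+1$, which forces $k$ to lie between $i$ and $j$.

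The planarity input from \cref{lem:disjoint-path} is what I expect to make this forcing work: it prevents the excursion from ``crossing over itself'' in a way that would make the intermediate return point $k$ fall outside the interval $[\min(i,j),\max(i,j)]$, and it rules out configurations where the two sub-paths would be linked rather than nested. So the heart of the proof is: use \cref{lem:disjoint-path} to show the excursion decomposes into nested sub-excursions whose widths sum to $\abs{j-i}$, then apply induction to each piece and add the lengths. I anticipate the fiddliest part will be correctly identifying the cut vertex and checking the parity/direction conditions (via \cref{lem:vertical-direction}) so that each piece really is an excursion rather than a degenerate path of fewer than three vertices.
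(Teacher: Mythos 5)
Your setup already contains a parity error that signals trouble. The first and last edges of an excursion are \emph{horizontal}, so \cref{lem:vertical-direction} (which concerns vertical edges lying on a vertical line) says nothing about them. What you can do is apply \cref{lem:parity-direction} to them: they are parallel and start at $(a-1,i)$ and $(a,j)$ respectively, so $(a-1)+i \equiv a+j \bmod 2$, i.e.\ $i \equiv j+1 \bmod 2$. Thus $\abs{j-i}$ is \emph{odd}, not even --- exactly as the base case $\abs{j-i}=1$ shows. Your ``consistency check'' is in fact inconsistent: since the length of a lattice path has the same parity as its total displacement $\abs{j-i}$, an even $\abs{j-i}$ would force even length, contradicting the odd value $2\abs{j-i}+1$. (Also, the path crosses $x=a-\tfrac12$ exactly twice, not an odd number of times.)

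The main gap is the inductive step. Cutting $\cC$ at a vertex produces two pieces whose lengths add \emph{exactly} and which are not excursions (each has an endpoint in the interior of $\cC$); and if both pieces did satisfy the claimed formula, the total would be $2\abs{j-i}+2$ --- the off-by-one you noticed but cannot dispel by appeal to ``shared structure''. The fix is to decompose by \emph{removing edges}, not cutting at vertices: with $i<j$, delete the first edge, the last edge, and all $t+1$ vertical edges of $\cC$ lying on the line $x=a$; what remains is $t$ many $(a+1)$-excursions $\cC_1,\dots,\cC_t$, with $\abs{\cC} = 3 + t + \sum_{\ell}\abs{\cC_\ell}$, so the removed vertical edges absorb the extra $+1$'s. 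More seriously, your expectation that planarity forces the return points to nest inside $[\,i,j\,]$ is false in half the cases. By \cref{lem:vertical-direction} all vertical edges of $\cC$ on $x=a$ point the same way; if they point \emph{downward} (against the direction from $i$ to $j$), then their starting $y$-coordinates satisfy $y_{s-1} < \cdots < y_0 = i < y_t = j+1 < y_{t-1} < \cdots < y_s$ for some $s$: they spiral \emph{outward}, escaping $[\,i,j\,]$ above (and, when $s\ge 2$, below as well). The sub-excursion widths then do not telescope to $\abs{j-i}$; the sum comes out to $4(y_s-y_{s-1}) + 2(i-j) + 5 \bmod 8$, and one recovers $2(j-i)+1$ only because $y_s \equiv y_{s-1} \bmod 2$ and $i \equiv j+1 \bmod 2$, which make the error terms multiples of $8$. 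This parity-driven cancellation is the heart of the lemma, and it is exactly what your plan is missing --- indeed the second of the two parity facts it relies on is the one your setup gets backwards.
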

\begin{proof}
Consider such an excursion $\cC$. Let $\abs{\cC}$ denote its length. We argue by induction on $\abs{\cC}$. When $\abs{\cC} = 3$, we must have $\abs{j - i} = 1$, so the result holds trivially.

Suppose the result holds for all excursions with smaller length than $\cC$. Reversing the orientation of $\cC$ and switch $i,j$ if necessary, assume $i<j$. Let $i = y_0, y_1, \cdots, y_t$ be the starting $y$-coordinate of the edges of $\cC$ lying on the vertical line $x=a$, in the order they appear when traversing $\cC$. By \cref{lem:vertical-direction}, we have
\begin{equation}
\label{eq:excursion-start-end-parity}
 i \equiv y_\ell \equiv j + 1 \bmod 2, \forall 0 \leq \ell \leq t.   
\end{equation}
and all edges of $\cC$ on the vertical line $x = a$ are in the same direction. We split into two cases.

\textbf{Case 1}: They point in the positive $y$-direction. In this case, for each $\ell \in [t - 2]$, the excursion $\cC$ contains disjoint paths $(a, i + 1) \to (a, y_{\ell})$, $(a, y_{\ell} + 1) \to (a, y_{\ell + 1})$, and $(a, y_{\ell + 1} + 1) \to (a, j)$, all lying $\cH_a$ and are disjoint. By \cref{lem:disjoint-path}, we must have
$$i < y_{\ell} < y_{\ell + 1} < j.$$
Thus we have
$$i = y_0 < y_1 < \cdots < y_{t} = j - 1.$$
\textbf{Case 2}: They point in the negative $y$-direction. In this case, let $s$ be the smallest integer in $[t]$ such that $y_{s - 1} < y_s$. Then clearly $y_{s - 1} < y_{s - 2} < \cdots < y_0.$ For any $i \in [t - 1]$ with $i > s$, $\cC$ contains disjoint paths $(a, y_{s - 1} - 1) \to (a, y_{s})$ and $(a, y_s - 1) \to (a, y_i)$ lying in $\cH_a$. By \cref{lem:disjoint-path}, we must have $y_{s - 1} < y_i < y_s$. Furthermore, $\cC$ contains disjoint paths $(a, y_s - 1) \to (a, y_{i})$ and path $(a, y_{i} - 1) \to (a, y_{i + 1})$ lying in $\cH_a$. Thus, we must have $y_{i + 1} < y_i$. To summarize, we have
$$y_{s - 1} < \cdots  < y_0 = i < y_{t} = j + 1 < y_{t - 1} < \cdots < y_s.$$
We partition $\cC$ by the $(t + 1)$ edges starting at $(a, y_i)(0 \leq i \leq t)$. Removing these edges, $\cC$ is partitioned into the edges $(a-1, i) \to (a, i)$, $(a, j) \to (a - 1, j)$, and $(a + 1)$-excursions $\cC_i(i \in [t])$. Furthermore, each $\cC_i$ has strictly smaller length than $\cC$, so they satisfy the induction hypothesis. We have
$$\abs{\cC} = 3 + t + \sum_{\ell = 1}^t \abs{\cC_\ell}.$$
We now consider the two cases above. In Case 1, $\cC_\ell$ is an excursion from $(a, y_{\ell - 1} + 1)$ to $(a, y_{\ell})$, so we have $\abs{\cC_{\ell}} \equiv 2\abs{y_{\ell} - y_{\ell - 1} - 1} + 1 \equiv 2(y_{\ell} - y_{\ell - 1}) - 1 \bmod{8}.$ Thus
$$\abs{\cC} \equiv 3 + t + \sum_{\ell = 1}^t (2(y_{\ell} - y_{\ell - 1}) - 1) \equiv 3 + 2(y_t - y_0) \equiv 2(j - i) +1 \bmod{8}.$$
In Case 2, $\cC_\ell$ is an $(a + 1)$-excursion from $(a, y_{\ell - 1} - 1)$ to $(a, y_{\ell})$, so for any $\ell \in [t]$,
$$\abs{\cC_{\ell}} \equiv 2\abs{y_{\ell} - y_{\ell - 1} + 1} + 1 \equiv \begin{cases}
2(y_{\ell} - y_{\ell - 1}) + 3, \ell = s \\
2(y_{\ell - 1} - y_{\ell}) - 1, \ell \neq s
\end{cases} \bmod{8}.$$
Thus
$$\abs{\cC} \equiv 3 + t + (2(y_{s} - y_{s - 1}) + 3) + \sum_{\ell \neq s, \ell \in [t]} (2(y_{\ell - 1} - y_{\ell}) - 1) \equiv 4y_s - 4y_{s - 1} - 2j + 2i + 5 \bmod{8}.$$
By \eqref{eq:excursion-start-end-parity}, we have $y_s - y_{s - 1} \equiv 0 \bmod{2}$ and $i - j \equiv 1 \bmod{2}$, so
$$\abs{\cC} \equiv 2(j - i) + 1 \bmod{8}.$$
In both cases, the induction hypothesis holds for $\cC$, so the induction step is complete.
\end{proof}
\begin{proof}[Proof of \cref{thm:loop-length}]
Consider a Hitomezashi Loop $\cL$. Let $a = \min_{(x, y) \in \cL} x$. By \cref{lem:vertical-direction}, we can orient $\cL$ such that all edges lying on $x = a$ point downward. Let $y_0, \cdots, y_{t - 1}$ be the $y$-coordinates of the starting vertices of edges of $\cL$ lying on the vertical line $x=a$, in the order they appear when traversing $\cL$, with $y_0$ being the largest among them. Define $y_{t} = y_0$.

For each $i \in [t - 2]$, $\cL$ contain disjoint paths $(a, y_0 - 1) \to (a, y_{i})$ and $(a, y_{i} - 1) \to (a, y_{i + 1})$ lying in $\cH_a$. Since $y_{i}, y_{i + 1}$ are less than $y_0$, we must have $y_{i + 1} < y_i$ by \cref{lem:disjoint-path}. Thus $y_0 > y_1 > \cdots > y_{t-1}$.

Removing the $t$ edges on $x = a$, $\cL$ is divided into $(a + 1)$-excursions $\cC_i$ from $(a, y_{i - 1} - 1)$ to $(a, y_{i})$ for $i \in [t]$. By \cref{lem:excursion-length}, we have
\begin{align*}
    \abs{\cL} &= t + \sum_{i = 1}^t \abs{\cC_i} \equiv t + \sum_{i = 1}^{t} (2\abs{y_{i - 1} - 1 - y_i} + 1) \\
    &\equiv t + \sum_{i = 1}^{t - 1} (2y_{i - 1} - 2y_i - 1) + (2y_{0} - 2y_{t - 1} + 3) \\
    &\equiv 4(y_0 - y_{t - 1}) + 4 \bmod{8}.
\end{align*}
By \cref{lem:parity-direction}, all $y_i$'s have the same parity. So $\abs{\cL}$ is congruent to $4$ modulo $8$.
\end{proof}

%\begin{proof}[Proof of \cref{thm:loop-length}, independent of "passage"]
%We slice the Hitomezashi loop along any vertical line not disjoint from it. Counterclockwisely along the loop, denote the $y$-coordinate of the intersections of the loop with the vertical line by $y_1,\cdots,y_{2t}$. Then by \cref{lem:excursion-length}, the length of the loop is $2\sum_{i=1}^{2t}|y_i-y_{i+1}|$, where $y_{2t+k}=y_k$. The points $y_i$'s are divided into three types: those that are local maxima: $y_i>y_{i-1},y_{i+1}$; those that are local minima; those that are neither. Relabel in order the local maxima by $y_1,\cdots,y_r$ and local minima by $z_1,\cdots,z_r$, and without loss of generality assume $y_1$ comes before $z_1$. Then the order of these points is exactly $y_1,z_1,y_2,z_2,\cdots,y_r,z_r$, and the length of the loop can be rewritten as $4\sum_{i=1}^r(y_i-z_i)$.

%Finally, notice each $y_i-y_{i+1}$ is odd, so $y_i-z_i$ is odd/even if and only if the loop points in different/same $x$-directions at the points $y_i,z_i$. By a topological consideration, the number of left-pointing $y_i$'s is one more than the left-pointing $z_i$'s. Thus there's an odd number of pairs $(y_i,z_i)$ pointing in different $x$-directions, this finishes the proof.
%\end{proof}

\begin{figure}[h]
    \centering
    \begin{minipage}{.475\textwidth}
      \centering
          \includegraphics[width=.5\linewidth]
          {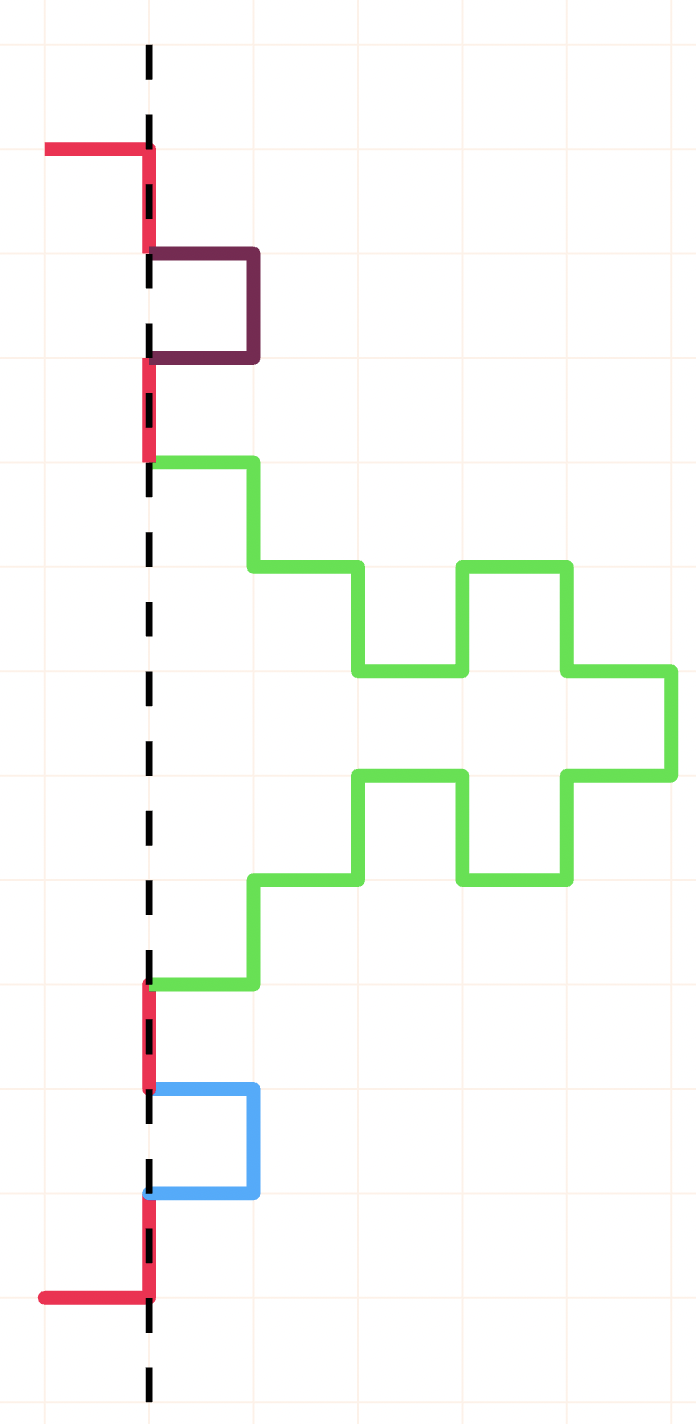}
    \end{minipage}%
    \begin{minipage}{.525\textwidth}
      \centering
      \includegraphics[width=.5\linewidth]{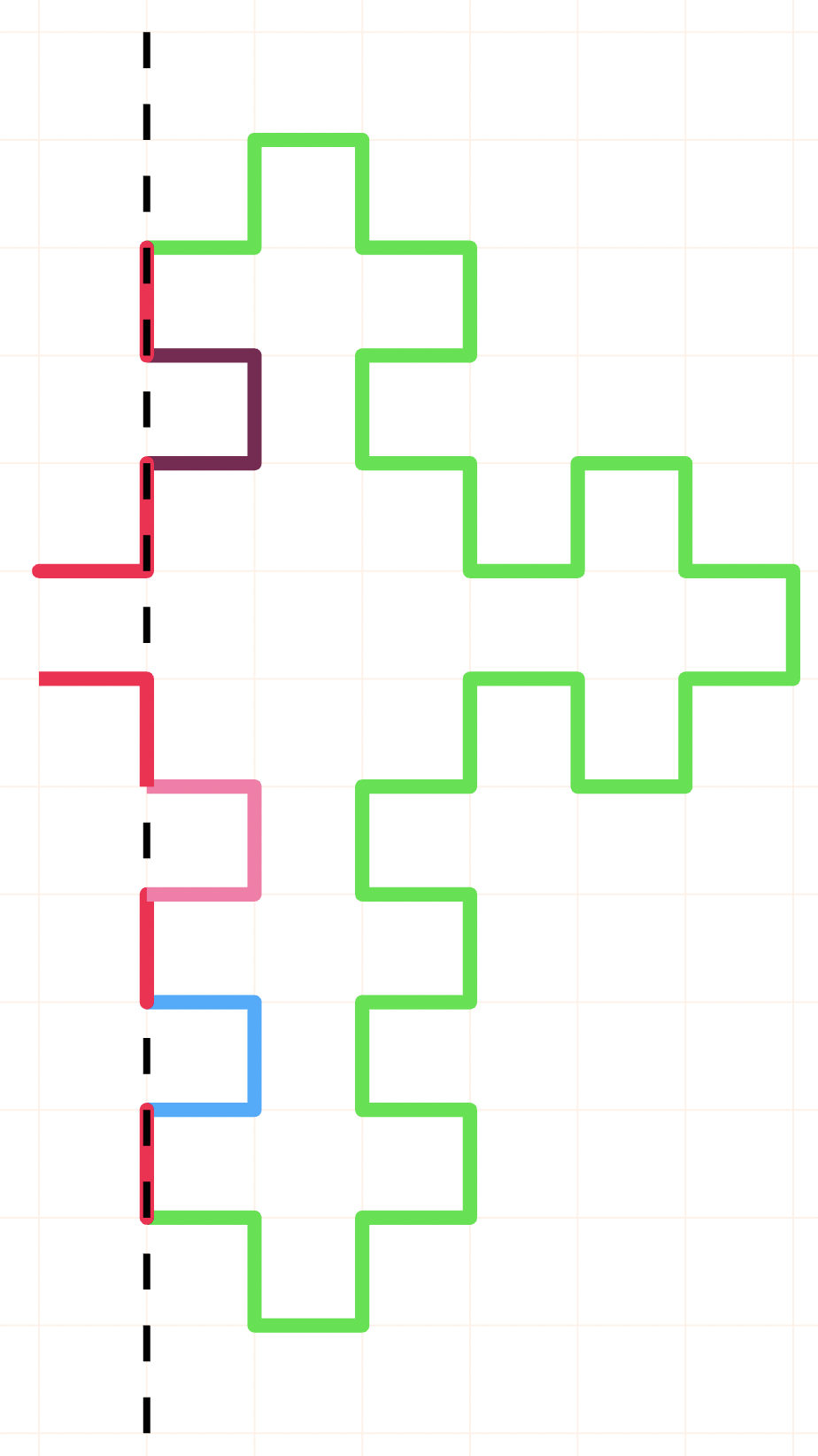}
    \end{minipage}
    \caption{Two $a$-excursions, corresponding to Case 1 and Case 2 in the proof. The dashed black line denotes the line $x = a$. The non-red subpaths are the $(a + 1)$-excursions we induct upon.}
    \label{fig:excursions}
\end{figure}

\section*{Acknowledgement}
Shengtong Zhang is supported by the Craig Franklin Fellowship in Mathematics at Stanford University.
\bibliographystyle{plain}
\bibliography{bib}

\end{document}